\newif\ifdviwin
\newif\ifdviwin
\def\cS{\mathcal{S}}
\def\m2r{\mathbb{M}^2\times\mathbb{R}}
\def\h2r{\mathbb{H}^2\times\mathbb{R}}
\let\tilde=\widetilde
\let\parc=\partial
\def\esiz{\langle}
\def\esde{\rangle}
\def\cte.{\mathop{\rm cte.}\nolimits}
\def\E{\mathbb{E}}
\def\R{\mathbb{R}}
\def\M{\mathbb{M}}
\def\D{\mathbb{D}}
\def\H{\mathbb{H}}
\def\S{\mathbb{S}}
\def\Ekt{\E(\kappa,\tau)}
\def\nil{\mathrm{Nil}_3}
\def\psl{\tilde{PSL_2}(\R)}
 \newtheorem{defi}{Definition}
 \newtheorem{teo}[defi]{Theorem}
 \newtheorem{cor}[defi]{Corollary}
 \newtheorem{lem}[defi]{Lemma}
 \newenvironment{proof}{\rm \trivlist \item[\hskip \labelsep{\it
      Proof}:]}{\nopagebreak \hfill $\Box$ \endtrivlist}
\numberwithin{equation}{section}
\begin{document}

\mbox{}\vspace{0.4cm}

\begin{center}
\rule{15cm}{1.5pt}\vspace{0.5cm}

\renewcommand{\thefootnote}{\,}
{\Large \bf Height estimates for constant mean curvature graphs in $\mathrm{Nil}_3$ and $\tilde{PSL_2}(\mathbb{R})$ \footnote{\hspace{-.75cm} Mathematics Subject
Classification: 53A10,53C30}}\\ \vspace{0.5cm} {\large Antonio Bueno}\\ \vspace{0.3cm} \rule{15cm}{1.5pt}
\end{center}
  \vspace{1cm}
Departamento de Geometría y Topología, Universidad de Granada,
E-18071 Granada, Spain. \\ e-mail: jabueno@ugr.es \vspace{0.3cm}

 \begin{abstract}
In this paper we obtain height estimates for compact, constant mean curvature vertical graphs in the homogeneous spaces $\mathrm{Nil}_3$ and $\widetilde{PSL}_2(\mathbb{R})$. As a straightforward consequence, we announce a structure-type result for complete graphs defined on relatively compact domains.
 \end{abstract}

\section{Introduction}

In the last decades, height estimates have become a powerful tool when studying the global behavior of a certain class of immeresd surfaces in some ambient space, see for instance \cite{AEG,He,HLR, KKMS,KKS, Me, Ro1,RoSa}. Heinz \cite{He} proved that if $M$ is a \emph{compact graph} in the euclidean space $\R^3$ with positive constant mean curvature $H$, ($H$-surface in the following), and boundary $\partial M$ lying on a plane $\Pi$, then the \emph{maximum height} that $M$ can reach from $\Pi$ is $1/H$. This estimate is optimal, since it is attained by the $H$-hemisphere intersecting orthogonally $\Pi$. Applying \emph{Alexandrov reflection technique} yields that a compact embedded $H$-surface in $\R^3$ with boundary on $\Pi$ has height from $\Pi$ at most $2/H$.

These height estimates for $H$-surfaces in $\R^3$ were the cornerstone for Meeks \cite{Me} in his global study of $H$-surfaces in $\R^3$; for example, he showed that there do not exist properly embedded $H$-surfaces with one end in $\R^3$, and if a properly embedded $H$-surface has two ends, then the surface stays at bounded distance from a straight line. Later, Korevaar, Kusner and Solomon \cite{KKS} proved that a properly embedded $H$-surface lying inside a solid cylinder must be rotationally symmetric and hence a cylinder or an onduloid.  

In \cite{KKMS} Korevaar, Kusner, Meeks and Solomon obtained optimal bounds for the height of $H$-graphs and of compact, embedded $H$-surfaces when $H>1$ in the \emph{hyperbolic space} $\H^3$ with boundary lying on a totally geodesic plane. In the formulation of the problem in both \emph{space forms} $\R^3$ and $\H^3$, the plane where $\partial M$ lies can be chosen without specifying its orthogonal direction, as $\R^3$ and $\H^3$ are \emph{isotropic}; in general, a riemannian manifold is isotropic if its isometry group acts transitively on the tangent bundle. 

In this context, the product spaces $\Sigma^2\times\R$ defined as the riemannian product of a complete riemannian surface $\Sigma^2$ and the real line $\R$ are closely related to the space forms in the sense that they are highly symmetric. In \cite{HLR} Hoffman, de Lira and Rosenberg obtained height estimates for compact embedded $H$-surfaces with boundary contained in a slice $\Sigma^2\times\{t_0\}$. This result was improved by Aledo, Espinar and Gálvez \cite{AEG}, exhibiting \emph{sharp} bounds for the height of compact, embedded $H$-surfaces in $\Sigma^2\times\R$ with boundary in a slice, and characterizing when equality held. As happened in $\R^3$ and $\H^3$, the $H$-graph in $\Sigma^2\times\R$ with boundary in a slice $\Sigma^2\times\{t_0\}$ attaining the maximum height over $\Sigma^2\times\{t_0\}$, corresponds to the rotational $H$-hemisphere intersecting orthogonally $\Sigma^2\times\{t_0\}$. 

For the particular case when the base $\Sigma^2$ is a complete, simply connected surface with constant curvature $\kappa$, the spaces arising are the product spaces $\M^2(\kappa)\times\R$. Such product spaces belong to a two parameter family of homogeneous, simply connected 3-dimensional manifolds, the $\Ekt$ spaces. In Section \ref{prel}, we will introduce these spaces and give a geometric sense to the constants $\kappa$ and $\tau$. For instance, the product spaces correspond to the case $\tau=0$ in the $\Ekt$ family. In the last decade, the theory of immersed surfaces in the $\Ekt$ spaces, and more specifically constant mean curvature and minimal surfaces, have become a fruitful theory focusing the attention of many geometers. See \cite{AbRo1, AbRo2,Da, DHM, FeMi} and references therein for an outline of the development of this theory.

Our objective in this paper is to obtain height estimates for vertical $H$-graphs in the Heisenberg space $\nil$ and in the space $\psl$, the universal cover of the positively oriented isometries of the hyperbolic plane $\mathbb{H}^2$, which correspond to the particular choices in the $\Ekt$ family of $\kappa=0,\tau>0$ and $\kappa<0,\tau>0$, respectively. To obtain the height estimates we will use the fact that $H$-graphs (or more generally, $H$-surfaces transverse to a Killing vector field) are stable, hence have bounded curvature at any fixed positive distance from their boundary. This behavior of the stability of $H$-surfaces has been exploited widely in the literature; see the proof of the Main Theorem in \cite{RST} for a global understanding of this technique in arbitrary complete 3-manifolds with bounded sectional curvature.

\section{Homogeneous 3-dimensional spaces with 4-dimen-\\sional isometry group.}\label{prel}

Let $\M^2(\kappa)$ be the complete, simply connected surface of constant curvature $\kappa\in\R$. The family of homogeneous, simply connected 3-dimensional manifolds $\E$ with a 4-dimensional isometry group, can be defined as a family of riemannian submersions $\pi:\E\longrightarrow\M^2(\kappa)$. The \emph{fibre} that passes through a point $p\in\M^2(\kappa)$ is defined as $\pi^{-1}(p)$, and translations along these fibres are ambient isometries generated by the flow of a unitary Killing vector field, $\xi$. The Killing vector field is related to the Levi-Civita connection $\overline{\nabla}$ of $\E$ and the cross product by the formula
$$
\overline{\nabla}_X\xi=\tau X\times\xi,
$$
where $\tau$ is a constant named the \emph{bundle curvature}. Both $\kappa$ and $\tau$ satisfy $\kappa-4\tau^2\neq 0$, and after a change of orientation of $\E$ we can suppose that $\tau>0$. These spaces are denoted by $\Ekt$, where $\kappa,\tau$ are the constants defined above. Depending on the value of $\kappa$ and $\tau$, we obtain the following geometries:

\begin{itemize}
\item If $\tau=0$, then we have the product spaces $\M^2(\kappa)\times\R$, i.e. the space $\S^2(\kappa)\times\R$ if $\kappa>0$, and the space $\H^2(\kappa)\times\R$ if $\kappa<0$.

\item If $\tau>0$ and $\kappa=0$, the $\Ekt$ space arising is the Heisenberg group $\nil$, the Lie group of matrices 
$$
\left\{\left(\begin{matrix}
1&a&b\\
0&1&c\\
0&0&1
\end{matrix}\right);\ \ a,b,c\in\R\right\},
$$
endowed with a one-parameter family of left-invariant metrics.

\item When $\tau>0$ and $\kappa<0$, we obtain the space $\psl$, the universal cover of the positively oriented isometries of the hyperbolic plane $\H^2$, endowed with a two-parameter family of left-invariant metrics. Up to a homothetic change of coordinates, the family of left-invariant metrics turns out to depend on one parameter.

\item When $\tau>0$ and $\kappa>0$, the $\Ekt$ spaces are the Berger spheres. These spaces can be realized as the 3-dimensional sphere $\S^3$ endowed with a one-parameter family (again, after a homothetic change) of metrics, which are obtained in such a way that the Hopf fibration is still a riemannian fibration, but the length of the fibres is modified.
\end{itemize}
We can give a unified model for the $\Ekt$ spaces; when $\kappa\leq 0$ the model is global and when $\kappa>0$ we get the universal cover of $\Ekt$ minus one fibre. We endow $\R^3$ (if $\kappa\geq 0$) and $\big(\D(2/\sqrt{-\kappa})\times\R\big)$ (if $\kappa<0$) with the metric
$$
ds^2=\lambda^2\big(dx^2+dy^2\big)+\big(dz+\tau\lambda(ydx-xdy)\big)^2,
$$
where $\lambda$ is defined as
$$
\displaystyle{\lambda=\frac{4}{4+\kappa(x^2+y^2)}}.
$$
The riemannian submersion is given by the projection onto the first two coordinates. The vector field $\partial_z$ is the unitary Killing vector field whose flow generates the \emph{vertical translations}. The integral curves of this flow are the fibres of the submersion, and they are complete geodesics. The fields given by
$$
E_1=\frac{1}{\lambda}\partial_x-\tau y\partial_z,\ \ E_2=\frac{1}{\lambda}\partial_y+\tau x\partial_z,\ \ E_3=\partial_z,
$$
are an orthonormal basis at each point. In this framework, the angle function of an immersed, orientable surface $M$ is defined as $\nu=\langle\eta,\partial_z\rangle$, where $\eta$ is a unit normal vector field defined on $M$.

Henceforth, we will denote simply by $\big(\mathbb{E},\langle\cdot,\cdot\rangle\big)$ to any of the $\Ekt$ spaces with the model given above. A \emph{section}\footnote{Abresch and Rosenberg also call these surfaces \emph{umbrellas}, see \cite{AbRo2}.} of $\mathbb{E}$ is a subset of the form $\{z=z_0;\ z_0\in\R\}$, where $z_0$ is called the height of the section. Every such a section is a minimal surface, and when $\tau=0$ they are totally geodesic copies of $\M^2(\kappa)$ that differ one from the other by a vertical translation. A \emph{vertical graph} in $\E$ is a surface with the property that it intersects each fibre of the submersion at most once. As a matter of fact, each vertical graph in $\E$ can be parametrized as
$$
\big\{(x,y,f(x,y));\ (x,y)\in\Omega\big\},
$$
for a certain smooth function $f$ defined in a domain $\Omega$ contained in some section $\{z=z_0\},\ z_0\in\R$. Note that after a vertical translation, the domain of a vertical graph can be contained in a section with any height. A graph is compact if $\Omega$ is compact and $f$ extends to $\partial\Omega$ continuously. The \emph{boundary} of a compact graph is defined as $f(\partial\Omega)$. A compact graph has boundary in a section if its boundary has constant height. This is equivalent to the fact that $f$ restricted to $\partial\Omega$ is a constant function.

\subsection{Stability of $H$-surfaces in the $\Ekt$ spaces}\label{estabilidad}

It is a well known fact that an $H$-surface $M$ immersed in an arbitrary riemannian 3-manifold, is a critical point for the area functional associated to compactly supported variations of the surface that preserve the enclosed volume. Equivalently, $M$ is an $H$-surface if and only if it is a critical point for the functional Area-$2H$Vol  \cite{BCE}. The second variation of this functional is given by the quadratic form
\begin{equation}\label{eqestabilidad}
\mathcal{Q}(f,f)=-\int_M\big(\Delta_M f+f(|\sigma|^2+\mathrm{Ric}(\eta)\big)fdA,\hspace{.5cm} \forall f\in C_0^\infty(M),
\end{equation}
where $\Delta_M$ is the Laplace-Beltrami operator of the surface $M$, $|\sigma|^2$ is the squared length of the second fundamental form of $M$, $\eta$ is the unit normal of $M$ and $\mathrm{Ric}(\eta)$ is the \emph{Ricci curvature} along the direction $\eta$. Equation \eqref{eqestabilidad} can be rewritten by defining the elliptic operator
\begin{equation}\label{jacobi}
\mathcal{L}=\Delta_M+|\sigma|^2+\mathrm{Ric}(\eta)
\end{equation}
and thus \eqref{eqestabilidad} is equivalent to
\begin{equation}\label{estabilidadjacobi}
\mathcal{Q}(f,f)=-\int_M f\mathcal{L}f dA,\hspace{.5cm} \forall f\in C_0^\infty(M).
\end{equation}
The operator $\mathcal{L}$ is the \emph{Jacobi operator}, or \emph{stability operator} of $M$. An orientable immersion $M$ in an $\Ekt$ space is said to be \emph{stable} if and only if
$$
-\int_M f\mathcal{L}f dA\geq 0,\hspace{.5cm} \forall f\in C_0^\infty(M).
$$

The non-vanishing functions $f\in C^\infty(M)$ lying in the kernel of $\mathcal{L}$ are called \emph{Jacobi functions}. If $M$ is an orientable immersed surface in an $\Ekt$ space and $\nu$ denotes the angle function of $M$, then $\nu$ is a Jacobi function for the stability operator $\mathcal{L}$ \cite{Da}, i.e. the elliptic equation $\mathcal{L}\nu=0$ holds. This equation reads as
\begin{equation}\label{pdeangle}
\Delta_M\nu+\nu\big((1-\nu^2)(\kappa-4\tau^2)+|\sigma|^2+2\tau^2\big)=0.
\end{equation}
A classical theorem due to Fischer-Colbrie \cite{Fi} asserts that the existence of a positive Jacobi function defined on a surface $M$ is equivalent to the stability of the surface. 

Consider now a vertical graph $M$ in $\Ekt$. As by definition $M$ intersects each fibre of the space $\Ekt$ at most once, then $M$ is transversal to the vertical Killing vector field $\partial_z$ at every interior point. This is equivalent to the fact that the angle function $\nu=\langle\eta,\partial_z\rangle$ has no zeros at any interior point of the graph. As a matter of fact, each vertical $H$-graph in an $\Ekt$ space is a stable surface, since either the function $\nu$ or $-\nu$ is positive.

\section{Height estimates}

In this section, $H$ will denote a positive constant and $\big(\E,\langle\cdot,\cdot\rangle\big)$ will be either the space $\nil$ or $\psl$ with the corresponding metric. In particular, as in both spaces we have $\kappa\leq 0$, $\E$ is given by the global model defined in Section \ref{prel}. The theorem that we prove is the following:

\begin{teo}\label{teoprin}
Let $H$ be a positive constant and suppose that
$$
4H^2+\kappa>0.
$$
Then, there exists a constant $C=C(H,\kappa,\tau)>0$, such that for every vertical $H$-graph $M$ in $\E$ whose positive height is realized and with boundary contained in a section, the height that $M$ reaches over that section is at most $C$.
\end{teo}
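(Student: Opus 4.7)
The plan is to argue by contradiction, combining the stability of vertical $H$-graphs with the curvature estimates of Rosenberg-Souam-Toubiana \cite{RST} and a limit-extraction argument. Suppose the theorem fails: there is a sequence $\{M_n\}$ of vertical $H$-graphs in $\E$, each with boundary in $\{z=0\}$ (after a vertical translation) and with maximum positive height $h_n\to+\infty$. Let $p_n\in M_n$ realize $h_n$, and translate vertically by $-h_n$ to obtain $\widetilde{M}_n$ whose top point $\widetilde{p}_n$ lies in $\{z=0\}$ and whose boundary lies in $\{z=-h_n\}$. Since the height function of $\widetilde{M}_n$ has an interior maximum at $\widetilde{p}_n$, the tangent plane there is horizontal, so after fixing the orientation we can arrange $\nu(\widetilde{p}_n)=1$; in particular $\nu>0$ on $\widetilde{M}_n$.

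By Subsection \ref{estabilidad} each $\widetilde{M}_n$ is a stable $H$-surface. Since $\E$ is homogeneous, its sectional curvature is bounded, so \cite{RST} furnishes a constant $C'=C'(H,\kappa,\tau)$ with $|\sigma_{\widetilde{M}_n}|(p)\,d(p,\partial\widetilde{M}_n)\leq C'$ for every $p\in\widetilde{M}_n$. As $d(\widetilde{p}_n,\partial\widetilde{M}_n)\geq h_n\to+\infty$, the second fundamental forms of $\widetilde{M}_n$ are uniformly bounded on any fixed ambient ball around $\widetilde{p}_n$. Standard $C^{2,\alpha}$-compactness then extracts, along a subsequence, smooth convergence on compact subsets to a complete immersed $H$-surface $M_\infty\subset\E$ through a limit point $p_\infty$ at height $0$ with horizontal tangent plane and $z\leq 0$ on $M_\infty$, with equality at $p_\infty$. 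The limit is stable and its angle function $\nu_\infty\geq 0$ solves $\cL\nu_\infty=0$ with $\nu_\infty(p_\infty)=1$; the strong maximum principle then forces $\nu_\infty>0$, so $M_\infty$ is itself a complete vertical $H$-graph.

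It remains to derive a contradiction from the existence of such an $M_\infty$, and this is where the hypothesis $4H^2+\kappa>0$ becomes essential. Combining the PDE for $z|_{M_\infty}$, the Jacobi equation \eqref{pdeangle} for $\nu_\infty$, and the universal inequality $|\sigma|^2\geq 2H^2$, one constructs an auxiliary function of $z$ and $\nu_\infty$ whose Laplacian has a definite sign precisely under the threshold $4H^2+\kappa>0$. The maximum principle applied to this function forces $M_\infty$ to lie in a horizontal section; but such sections are minimal, contradicting $H>0$. The main technical obstacle is precisely this last step: the auxiliary function must be designed to absorb the bundle-curvature contributions in $\tau$ that distinguish $\nil$ and $\psl$ from the products $\M^2(\kappa)\times\R$ treated in \cite{AEG}, and the hypothesis $4H^2+\kappa>0$ is indispensable because in the opposite regime parabolic-type entire $H$-graphs of unbounded height are known in $\psl$, showing that the estimate genuinely fails.
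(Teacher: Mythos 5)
Your first half runs parallel to the paper's actual argument (contradiction, normalize so the top point is far from the boundary, use stability of graphs plus the curvature estimates of \cite{RST} as in Lemma \ref{estuniforme}, extract a complete limit $H$-surface with signed angle function), apart from two slips: the orientation at the top point is not free to choose --- since $H>0$ and the graph touches a section from below there, the mean curvature comparison principle forces the normal to be downward, so $\nu=-1$ rather than $+1$ (a harmless relabelling, but you cannot ``arrange'' it); and $d(\widetilde p_n,\partial\widetilde M_n)\geq h_n$ is false in $\nil$ and $\psl$, where the ambient distance between the sections $\{z=0\}$ and $\{z=h\}$ is not $h$ (it only diverges with $h$, which is all you need). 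Also, strict positivity of $\nu_\infty$ only gives transversality to $\partial_z$, i.e.\ a local graph/multigraph; it does not make $M_\infty$ an entire vertical graph, and $\inf\nu_\infty^2=0$ is still possible even with $\nu_\infty$ nowhere zero.

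The genuine gap is the final step, which is exactly where the hypothesis $4H^2+\kappa>0$ must enter: you do not construct the auxiliary function, you only postulate that ``one constructs an auxiliary function of $z$ and $\nu_\infty$'' whose Laplacian has a sign, and you yourself flag this as the main obstacle. As stated the step would not go through: on the complete noncompact limit $M_\infty$ the ordinary maximum principle does not apply (the combination of $z$ and $\nu_\infty$ need not attain an extremum), and, more tellingly, if such a function with a definite sign of its Laplacian existed in $\nil$ or $\psl$, you would apply it directly to the compact graphs $M_n$ with boundary and obtain an explicit height bound, making the whole blow-up superfluous --- the absence of a workable flux-type function when $\tau\neq 0$ is precisely why the paper avoids this route. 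What the paper does instead is a dichotomy on the limit surface: if $\nu_\infty^2\geq c>0$, the projected metric is complete, the vertical projection is a covering map onto $\M^2(\kappa)$, hence $M_\infty$ is an entire graph, and sliding the rotational $H$-sphere (which exists because $4H^2+\kappa>0$) up against it contradicts noncompactness via the maximum principle; otherwise there are points with $\nu_\infty\to 0$, and a second limit at those points, using \eqref{pdeangle} and the nodal/maximum principle for the Jacobi operator, produces a complete \emph{stable} vertical cylinder $\gamma\times\R$ with $\kappa_\gamma=2H$, which by \cite{MaPR} is stable only if $\kappa_\gamma^2+\kappa=4H^2+\kappa\leq 0$ --- the desired contradiction. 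Your proposal is missing both of these ideas (the covering-map/sphere-sliding alternative and the stable-cylinder classification), so the proof is incomplete at its decisive point.
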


In the space $\psl$, the hypothesis $4H^2+\kappa>0$ has a relevant geometric sense, since this condition for $H$ and $\kappa$ ensures the existence of a rotationally symmetric $H$-sphere. In general, in an $\Ekt$ space the quantity $\sqrt{-\kappa}/2$ is known as the \emph{critical mean curvature}. There exists an $H$-sphere in an $\Ekt$ space if and only if $H>\sqrt{-\kappa}/2$.

Before proving Theorem \ref{teoprin} we recall a technical Lemma that guarantees a uniform bound of the second fundamental form for $H$-graphs in $\E$. See \cite{RST} for a detailed proof.

\begin{lem}\label{estuniforme}
Let $M$ be a vertical $H$-graph in $\E$, with boundary of $M$ in $\{z=0\}$. If $d>0$, there is a constant $K$, depending on $d$ and $\E$, such that $|\sigma(p)|<K$ for all $p$ in $M$ with $d(p,\partial M)>d$.
\end{lem}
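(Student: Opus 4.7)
The plan is to reduce the statement to the general curvature estimate of Rosenberg, Souam and Toubiana \cite{RST}, which provides a uniform bound on the second fundamental form of any stable $H$-surface in a complete Riemannian 3-manifold of bounded sectional curvature; the authors themselves allude to this reduction at the end of the introduction. Two hypotheses must be checked in order to apply that theorem: the ambient space $\E$ must have the required geometry, and the vertical graph $M$ must be stable.

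The first hypothesis is immediate from homogeneity. Both $\nil$ and $\psl$ are complete, homogeneous Riemannian 3-manifolds; in particular they have bounded sectional curvature and positive injectivity radius, so the hypotheses of the abstract curvature estimate are satisfied. Because $\E$ is homogeneous, the constant produced by the estimate depends only on $\E$ (equivalently, on $\kappa$ and $\tau$) and on the distance $d$.

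For the second hypothesis I use the argument already recorded in Section~\ref{estabilidad}: being a vertical graph, $M$ is transverse to $\partial_z$ at every interior point, so the angle function $\nu=\langle\eta,\partial_z\rangle$ has constant sign there. Up to reversing $\eta$ one may assume $\nu>0$. By \eqref{pdeangle} one has $\cL\nu=0$, so $\nu$ is a positive Jacobi function on the interior of $M$; Fischer-Colbrie's criterion \cite{Fi} then yields that $M$ is stable.

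With (i) and (ii) in hand, the Main Theorem of \cite{RST} applies directly and gives the desired $K=K(d,\E)$. The main technical obstacle, which is the actual content of \cite{RST}, is proved by a rescaling/compactness argument: supposing the estimate fails along some sequence $p_n\in M_n$ with $|\sigma(p_n)|\to\infty$ and $d(p_n,\partial M_n)>d$, one rescales the ambient metric by $|\sigma(p_n)|^2$ and extracts a smooth limit that is a complete, non-planar, stable minimal surface in $\R^3$ of bounded second fundamental form, contradicting the Fischer-Colbrie-Schoen theorem. The hypothesis $d(p,\partial M)>d$ is precisely what keeps $\partial M_n$ away during the blow-up and guarantees completeness of the limit; ensuring this completeness is the delicate step of the argument.
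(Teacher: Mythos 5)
Your proposal is correct and follows essentially the same route as the paper: the paper simply invokes the curvature estimate of \cite{RST} for stable $H$-surfaces (citing it for ``a detailed proof''), with the stability of vertical $H$-graphs supplied exactly as you argue in Section~\ref{estabilidad}, namely via the sign of the angle function $\nu$ and Fischer-Colbrie's criterion \cite{Fi}. Your additional sketch of the blow-up argument inside \cite{RST} is accurate but is content the paper delegates entirely to that reference.
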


Now, we stand in position to prove Theorem \ref{teoprin}.

\begin{proof}
Arguing by contradiction, suppose that the height estimate in the statement of the theorem does not hold. Then, there exists a sequence of compact vertical $H$-graphs $M_n$, whose boundaries are contained in sections of the form $\{z=z_n\}$, and such that if we denote by $h_n$ the height of each $M_n$ from $\{z=z_n\}$, then $\{h_n\}\rightarrow\infty$. After a vertical translation we can suppose that all the boundaries are contained in the section $\Pi=\{z=0\}$. By the mean curvature comparison principle, each graph is contained in one of the half-spaces $\{z\geq 0\}$ or $\{z\leq 0\}$. Passing to a subsequence we can suppose that all the graphs lie above $\Pi$, i.e. they lie in the half-space $\{z\geq 0\}$. Let $\eta_n$ be the unit normal to each $M_n$ such that the mean curvature with respect to $\eta_n$ is $H$. In particular, each $M_n$ is downwards oriented as a consequence again of the mean curvature comparison principle, and thus every angle function $\nu_n=\langle\eta_n,\partial_z\rangle$ is a negative function on $M_n$. Fix some positive number $d$ and let us now denote $M_n^*:=\{p\in M_n;\ d(p,\partial M_n)>2d\}$. As the heights of $M_n$ from $\Pi$ tend to infinity, it is clear that $M_n^*$ is a non-empty, possibly non-connected, graph over $\Pi$ for $n$ large enough. In this situation, Lemma \ref{estuniforme} ensures us that there exists a positive constant $\Lambda$ in such a way that the second fundamental form $\sigma_{M_n^*}$ of each surface $M_n^*$ satisfies $|\sigma_{M_n^*}|<\Lambda$.

Consider for each $n$ the connected component $M_n^0$ of $M_n^*$ of maximum height from $\Pi$. Let $x_n\in M_n^0$ be the point where this maximum height is attained, and consider the isometry $\Phi_n$ that sends $x_n$ to the origin. Now, define $M_n^1=\Phi_n(M_n^0)$. The length of the second fundamental form of each graph $M_n^1$ is uniformly bounded by $\Lambda>0$, as all the $M_n^1$ are obtained by translations of subsets of $M_n^*$. Moreover, the distances in $M_n^1$ of the origin to $\parc M_n^1$ diverge to $\8$. Now, by a standard compactness argument for a sequence of surfaces with bounded curvature, we deduce that, up to a subsequence, there are subsets $K_n\subset M_n^1$ that converge uniformly on compact sets in the $C^2$ topology to a complete, possibly non-connected, $H$-surface $M_{\8}$ that passes through the origin. From now on, we will consider the connected component of $M_{\8}$ that passes through the origin, and we will still denote this component by $M_{\8}$ . Let $\nu_{\8}:=\esiz \eta_{\8},\partial_z\esde$ denote the angle function of $M_{\8}$, where here $\eta_{\8}$ is the unit normal of $M_{\8}$. Since $M_{\8}$ is a limit of the downwards-oriented graphs $M_n^1$, we see that $\nu_{\8}$ is non-positive. We claim that $\nu_{\8}$ cannot be bounded away from zero; indeed, assume that $\nu_{\8}^2\geq c>0$ for some $c>0$. Consider the projection $\mathfrak{p}: M_{\8}\rightarrow\M^2(\kappa)$, let $\langle , \rangle_{\mathrm{proj}}$ be the induced metric on $M_{\8}$ via $\mathfrak{p}$, and let $\langle , \rangle$ be the induced ambient metric on $M_{\8}$. 

As $\esiz,\esde$ is complete and it is well-known that $\nu_{\8}^2\langle , \rangle\leq\langle , \rangle_{\mathrm{proj}}$, we conclude by $\nu_{\8}^2\geq c>0$ that  $\langle , \rangle_{\mathrm{proj}}$ is also complete. In particular, $\mathfrak{p}$ is a local isometry from $(M_{\8},\esiz,\esde_{\mathrm{proj}})$ onto $\M^2(\kappa)$. In these conditions, $\mathfrak{p}$ is necessarily a (surjective) covering map over the simply connected surface $\M^2(\kappa)$, and thus $M_\infty$ is an entire vertical graph. Let $\cS$ be the sphere with constant mean curvature $H$; the condition $4H^2+\kappa>0$ ensures us the existence of such a sphere for the case $\kappa<0$. Let $\cS(0)$ be such a sphere centered at the origin. Translate $\cS(0)$ down until it is below the graph of $M_\infty$. Then translate the sphere back up until it touches $M_\infty$ for the first time. By the maximum principle the sphere equals $M_\infty$, which contradicts that $M_\infty$ is not compact. Therefore, there must exist a sequence of $p_n\in M_{\8}$ with $\nu_{\8}(p_n)\to 0$. 

Let $\Theta_n$ be an isometry in $\E$ that takes each point $p_n$ to the origin $\mathfrak{o}\in\E$, and define $M_\8^n=\Theta_n(M_\8)$, which is a sequence of complete, stable surfaces with constant mean curvature $H$ passing through $\mathfrak{o}$ and whose angle functions satisfy $\nu_\8^n\leq 0$. Again, standard elliptic theory ensures that, up to a subsequence, the surfaces $M_\8^n$ converge to a stable $H$-surface $M_\8^*$, passing through $\mathfrak{o}$. As this convergence is $C^2$, the angle function $\nu_\8^*$ of $M_\8^*$ satisfies $\nu_\8^*\leq 0$ and $\nu_\8^*(\mathfrak{o})=0$. Also, the stability operators $\mathcal{L}_n$ converge to the stability operator $\mathcal{L}_\infty$ of the limit surface $M_\infty^*$.

The maximum principle for elliptic operators applied to $\mathcal{L}_\infty$ yields that any non-zero solution to \eqref{pdeangle} changes sign around any of its zeros. As $\mathcal{L}_\infty$ also admits the zero function as a solution and $\nu^*_\infty$ vanishes at $\mathfrak{o}$, the condition $\nu_\8^*\leq 0$ implies that $\nu_\8^*$ is identically zero. Therefore the limit surface $M_{\8}^*$ is contained in a flat cylinder $\gamma \times \R$, for a planar curve $\gamma$ in $\R^2$ or $\H^2$ (depending on whether $\kappa=0$ or $\kappa<0$, respectively). An analytic prolongation argument yields that the maximal surface containing $M_\8^*$ has to be the complete flat cylinder $\gamma \times \R$. This cylinder is an $H$-cylinder as well, and thus the geodesic curvature of $\gamma$ satisfies $\kappa_\gamma=2H$. This implies that $\gamma$ is a closed curve in $\R^2$ or $\H^2$ (depending if $\kappa=0$ or $\kappa<0$, respectively). In the cylinder $\gamma\times\R$, the operator $\mathcal{L}_\infty$ has the expression
$$
\mathcal{L}_\8=\Delta_M+\kappa_\gamma^2+\kappa.
$$

As all the surfaces $M_\8^n$ are stable, the limit cylinder $M_\8^*$ is also a stable surface. But a complete, vertical $H$-cylinder in an $\Ekt$ is stable if and only if \cite{MaPR}
$$
\kappa_\gamma^2+\kappa\leq 0.
$$
Thus, the limit cylinder is stable if and only if $4H^2+\kappa\leq 0$, which is a contradiction with the hypothesis $4H^2+\kappa>0$. This contradiction completes the proof of Theorem \ref{teoprin}.
\end{proof}

\begin{cor}	
If $H$ is a positive constant such that
$$
4H^2+\kappa>0,
$$
then there do not exist complete vertical $H$-graphs defined over relatively compact domains $\Omega\subset\{z=z_0\}$ in the spaces $\nil$ and $\psl$.
\end{cor}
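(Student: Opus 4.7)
The plan is to argue by contradiction against Theorem \ref{teoprin}. Suppose there exists a complete vertical $H$-graph $M = \{(x,f(x)) : x \in \Omega\}$ defined over a relatively compact domain $\Omega \subset \{z = z_0\}$. Since vertical projection restricts to a diffeomorphism $M \to \Omega$, the surface $M$ is non-compact.

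The first step is to show that $f$ must be unbounded on $\Omega$. Were $f$ bounded, $M$ would lie inside the compact slab $\overline{\Omega} \times [\inf_\Omega f,\sup_\Omega f]$. Applying Lemma \ref{estuniforme} to $M$, which has empty boundary (so every point is at infinite intrinsic distance from $\partial M$), yields a uniform bound on $|\sigma_M|$. A $C^2$-compactness argument analogous to the one in the proof of Theorem \ref{teoprin} then forces $M$ to be properly embedded in $\E$; as a closed subset of a compact region, $M$ would then be compact, contradicting the preceding paragraph. Hence $f$ is unbounded, and we may assume $\sup_\Omega f = +\infty$, the case $\inf_\Omega f = -\infty$ being analogous. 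The same compactness reasoning further ensures that $f \to +\infty$ at every boundary point of $\Omega$ where $f$ is not locally bounded above.

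Next I would produce compact sub-$H$-graphs of unbounded height. Pick a sequence of regular values $t_n \nearrow +\infty$ of $f$, and let $\Omega_n$ denote the connected component of $\{f < t_n\}$ containing a point where $f$ approaches its supremum. Properness of $M$ together with $f \to +\infty$ on the relevant part of $\partial\Omega$ imply $\overline{\Omega_n} \subset \Omega$, so $M_n = \{(x,f(x)) : x \in \overline{\Omega_n}\}$ is a compact vertical $H$-graph with boundary contained in $\{z = t_n\}$. Since $\Omega_n$ eventually contains any fixed point of $\Omega$ as $t_n \to +\infty$, the height of $M_n$ below $\{z = t_n\}$ is at least $t_n - z_0$, which diverges. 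After a vertical translation placing $\partial M_n$ inside $\{z = 0\}$, Theorem \ref{teoprin} provides the uniform bound $C(H,\kappa,\tau)$ on this height, the desired contradiction.

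The main obstacle will be the first step, specifically the proper embedding of $M$ and the boundary behavior $f \to +\infty$. What must be excluded is the degenerate scenario in which a sequence $p_n \in M$ with $\pi(p_n) \to q \in \partial\Omega$ has $z(p_n)$ bounded and angle function $\nu(p_n) \to 0$. Precisely as in the proof of Theorem \ref{teoprin}, the $C^2$-limit of appropriate translates of $M$ based at such $p_n$ would contain a complete flat $H$-cylinder $\gamma \times \R$, whose stability would force $\kappa_\gamma^2 + \kappa \leq 0$, equivalently $4H^2 + \kappa \leq 0$, contradicting the hypothesis of the corollary. This is where the curvature assumption $4H^2 + \kappa > 0$ re-enters, beyond its role inside Theorem \ref{teoprin} itself.
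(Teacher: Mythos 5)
Your overall strategy is the paper's: cut the complete graph along a horizontal section to produce a compact $H$-graph of large height and contradict Theorem \ref{teoprin} (the paper does this in three lines, normalizing $M\subset\{z\leq 0\}$ tangent to $\{z=0\}$ and intersecting with $\{z\geq -d_0\}$). The problems are in the steps you add to justify the reduction. First, the deduction ``uniform bound on $|\sigma_M|$ plus a $C^2$-compactness argument forces $M$ to be properly embedded'' is not valid: a complete surface with bounded second fundamental form contained in a compact region of $\E$ need not be proper (think of a dense leaf of a foliation), and properness -- equivalently, that $M$ cannot accumulate over $\partial\Omega$ at bounded height -- is precisely the point at issue. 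Your cylinder limit does correctly rule out accumulating sequences with $\nu(p_n)\to 0$ (this is a faithful reuse of the second half of the proof of Theorem \ref{teoprin}, and it is where $4H^2+\kappa>0$ re-enters), but it says nothing about accumulation with $\nu$ bounded away from zero, which must be excluded by a separate quantitative step: either a uniform graphical-radius lemma ($|\sigma|\leq\Lambda$ and $|\nu|\geq\varepsilon$ force a graph over a disk of uniform radius centered at $\pi(p_n)$, impossible once $\pi(p_n)\to\partial\Omega$), or, more simply, the covering argument already in the paper: if $|\nu|\geq\varepsilon$ on all of $M$, the projected metric is complete, $\pi$ is a covering of $\M^2(\kappa)$, and $M$ would be an entire graph, contradicting that it projects onto the relatively compact $\Omega$. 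Note that once this dichotomy is run ($\nu\to 0$ somewhere gives the stable cylinder contradiction; $|\nu|\geq\varepsilon$ gives an entire graph), the corollary is already proved, with no cutting and no appeal to Theorem \ref{teoprin} at all.

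Second, the cutting step is garbled as written. If $\sup_\Omega f=+\infty$ there is no ``point where $f$ approaches its supremum,'' and such points would lie in $\{f>t_n\}$, not $\{f<t_n\}$; what you want is the component of $\{f<t_n\}$ containing a fixed base point $x_0$. More seriously, compact containment $\overline{\Omega_n}\subset\Omega$ requires $f\to+\infty$ along every part of $\partial\Omega$ met by $\overline{\Omega_n}$, and the reduction ``assume $\sup f=+\infty$'' does not give this: the boundary behavior could be $f\to+\infty$ near one part of $\partial\Omega$ and $f\to-\infty$ near another, in which case the sublevel component reaches $\partial\Omega$, the cut piece is non-compact, its distance below $\{z=t_n\}$ is infinite and not realized, and Theorem \ref{teoprin} (whose hypotheses require the extremal height over the boundary section to be realized) does not apply. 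What the curvature/cylinder dichotomy actually yields is $|f|\to\infty$ at $\partial\Omega$; converting this into the one-sided statement your cut needs is an extra argument you have not supplied (the paper's own proof quietly assumes $f$ is bounded on one side with the extremum attained). A minor further point: your compact pieces lie \emph{below} their boundary section, so you must invoke the fiber-reversing isometry $(x,y,z)\mapsto(x,-y,-z)$ of the model, or the symmetric form of Theorem \ref{teoprin}, to bound the depth rather than the height.
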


\begin{proof}
Let $M$ be a complete vertical $H$-graph over a relatively compact domain $\Omega\subset\{z=z_0\}$. Without losing generality we can suppose that $M$ lies in the half-space $\{z\leq 0\}$ and intersects tangentially the section $\{z=0\}$. Let $C$ be the constant appearing in Theorem \ref{teoprin}. Then, as the height of $M$ with respect to the section $\{z=0\}$ is unbounded, there exists some $d_0>0$ such that if we intersect $M$ with the half-space $\{z\geq -d_0\}$, we obtain a compact $H$-graph with boundary lying in the section $\{z=-d_0\}$ and with height over $\{z=-d_0\}$ greater than $C$, contradicting Theorem \ref{teoprin}.
\end{proof}

We finish this paper with two observations concerning further discussions of height estimates of $H$-graphs in $\nil$ and $\psl$.

First, although the constant $C$ in Theorem \ref{teoprin} is not explicit, for some values of $H$ we can derive an estimate for it. Let $S:\E\rightarrow\R$ denote the \emph{scalar curvature} of $\mathbb{E}$, and suppose that there exists some constant $c>0$ such that the inequality 
\begin{equation}\label{ineqescalar}
3H^2+S(x)\geq c
\end{equation}
holds for every $x\in\E$. It was proved by Rosenberg in \cite{Ro2} that if $\Sigma$ is a stable $H$-surface immersed in $\E$, for every $p\in\Sigma$ one has 
$$
d_\Sigma(p,\partial\Sigma)\leq \frac{2\pi}{\sqrt{3c}}.
$$
Recall also that if $\Sigma$ is an immersed surface in $\E$, the intrinsic distance $d_\Sigma$ is always less or equal than the ambient distance. Thus, whenever inequality \eqref{ineqescalar} holds for some $c>0$ and every $x\in\E$, the height of an $H$-graph is less or equal than $2\pi/\sqrt{3c}$. In particular, the constant $C$ in Theorem \ref{teoprin} can be bounded from above by $2\pi/\sqrt{3c}$.

Second we point out that $2\pi/\sqrt{3c}$ is not optimal. Indeed, denote by $\mathcal{S}^{H,\kappa,\tau}$ to the rotationally symmetric $H$-sphere in $\nil$ or $\psl$, and by $\overline{\mathcal{S}^{H,\kappa,\tau}_+}$ to the upper, closed $H$-hemisphere. For $H$ big enough, the height of $\overline{\mathcal{S}^{H,\kappa,\tau}_+}$ tends to zero: see \cite{Tom,Tor} for explicit expressions of the height of $\overline{\mathcal{S}^{H,\kappa,\tau}_+}$ in the spaces $\nil$ and $\psl$. But for $H$ large enough inequality \eqref{ineqescalar} holds, proving that the estimate $2\pi/\sqrt{3c}$ is not sharp.

Motivated by the discussions made in the Introduction about the height estimates for $H$-graphs in the space forms $\R^3$ and $\mathbb{H}^3$, and in the product spaces $\M^2(\kappa)\times\R$, we suggest that the maximum height that an $H$-graph $M$ should attain in both $\nil$ and $\psl$ is the height of the upper, closed $H$-hemisphere $\overline{\mathcal{S}^{H,\kappa,\tau}_+}$, with equality at some point if and only if $M$ agrees with $\overline{\mathcal{S}^{H,\kappa,\tau}_+}$.

\vspace{1cm}

{\bf Acknowledgements} The author wants to express his gratitude to the referee for helpful comments and observations.

\def\refname{References}

\noindent The author was partially supported by MICINN-FEDER Grant No. MTM2016-80313-P, Junta de Andalucía Grant No. FQM325 and FPI-MINECO Grant No. BES-2014-067663.

\end{document}